\newtheorem{theorem}{Theorem}[section]
\newtheorem{prop}{Proposition}[section]
\newtheorem{cor}[theorem]{Corollary}
\newtheorem{definition}[theorem]{Definition}
\newtheorem{example}[theorem]{Example}
\numberwithin{equation}{section}
\newtheorem*{theorem*}{Theorem}
\title{Swallowtails and cone-like singularities on a maxface }
\author{Pradip Kumar}
\address{Department of Mathematics, Shiv Nadar University, Dadri 201314, Uttarpradesh, India.}
\email{pradip.kumar@snu.edu.in}
\author{Anu Dhochak}
\address{Department of Mathematics, Shiv Nadar University, Dadri 201314, Uttarpradesh, India.}
\email{ad404@snu.edu.in}
\date{}
\subjclass[2020]{53A35}
\keywords{maxface singularities, swallowtails, cone-like}
\begin{document}
\maketitle
\begin{abstract} When a connected component of the set of singular points of the maxface $X$  consists of  only generalized cone-like singular point, we construct a sequence of maxfaces $X_n$, with an increasing number of swallowtails, converging to the maxface $X$.  We include the general discussion toward this. 
\end{abstract}
\section{Introduction}
Maximal surfaces in the Lorentz-Minkowski space $\mathbb E_1^3$ are space-like immersions that maximize area locally.
They are similar to the minimal surfaces in $\mathbb R^3$, as both are zero mean curvature surface and can be constructed using many similar methods but in  the maximal surfaces non-isolated singularities appear. 

Estudillo and Romero \cite{Estudillo1992} named generalized maximal immersion for the maximal surfaces with singularities.  They are of two types, branched and non branched. Non-branched maximal immersions are the one when limiting tangent plane contains a light-like vector.  The Maxface, introduced by Umehara and Yamada \cite{UMEHARA2006}, are non branched generalized maximal immersions.

Umehara and Yamada \cite{UMEHARA2006} have shown that all the maxface, as a map to $\mathbb R^3$, become frontal, and a few of them near the singularity become front. As a front and frontal, in \cite{fujimori2015}, \cite{Fujimori2009}, \cite{Fujimori2007}, \cite{UMEHARA2006}, \cite{teramoto2020gaussian}, we see authors have discussed various singularities: swallowtails, cuspidal-edge, cuspidal cross cap, cuspidal butterflies, cuspidal $S_1^{-}$. There is another important singularity, called the  cone-like singularity. Cone-like singularities,  introduced by  O. Kobayashi  \cite{KOBAYASHI1984},  are non isolated  and discussed in \cite{Fernandez2005a}, \cite{Fujimori2009} etc.

In \cite{Kim2006}, Kim and Yang gave an example of family of maxfaces for each natural number $n$,  that has swallowtails in an increasing order. We call it (a name given by the authors in \cite{Fujimori2009}) Kim and Yang's toroidal maxface.  The authors discussed the global properties of this toroidal maxface.  The family of toroidal maxfaces seems to ``converge" to the cone-like. Moreover, in \cite{Fujimori2009}, the authors talked about trinoids with swallowtails whose computer graphics look cone-like.   All these poses a question, if we start from a maxface having a cone-like singularity (or generalized cone-like) then, can we find a sequence of maxfaces having an increasing number of swallowtails, and converging to the former one?  

In this article, we discuss the general situation, we start from a maxface with generalized cone-like singularity, and we construct the sequence in  various cases.   Let $\lambda$ be a non-constant real analytic curve, whose trace is compact, when $\lambda$ satisfies certain conditions, in section 4, we prove the following theorem. 
\begin{theorem*}[Theorem: \ref{thm:main}]
Let $X$ be a maxface having generalized cone-like singularity on $trace$ of $\lambda$, with singular Bj\"{o}rling data $\{\lambda,\alpha, \beta\}$. Moreover the data  $\{\lambda,\alpha, \beta\}$ has a sequence of the scaling functions as in the definition \ref{lemma:f_n}. Then there is a sequence of maxfaces $X_n$ defined on a neighborhood $\Omega$ of trace of $\lambda$, having an increasing number of swallowtails, and sequence of maxfaces converges (in the norm $\|\;.\;\|_\Omega$) to $X$, having cone-like.
\end{theorem*}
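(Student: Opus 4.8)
The plan is to build the sequence $X_n$ directly from the singular Bj\"orling representation attached to the data $\{\lambda,\alpha,\beta\}$, rather than by perturbing $X$ as a map. First I would record that, since the data is real analytic and $\lambda$ has compact trace, it extends holomorphically to a fixed open neighborhood $\Omega\subset\mathbb C$ of the trace of $\lambda$, and that the associated Weierstrass-type pair $(g,\omega)$ recovers $X=\operatorname{Re}\int\Phi_{(g,\omega)}$ on $\Omega$, with the singular set contained in $\{|g|=1\}$ and containing the trace of $\lambda$. The point of the generalized cone-like hypothesis is that it forces the relevant front invariant --- the derivative of the signed area density along the null direction --- to vanish identically along the trace of $\lambda$; this is precisely the degenerate limiting configuration we intend to approximate.

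Next I would feed the scaling functions of Definition \ref{lemma:f_n} into this representation, replacing the data by its rescaled version and producing, for each $n$, a holomorphic pair $(g_n,\omega_n)$ and hence a maxface $X_n=\operatorname{Re}\int\Phi_{(g_n,\omega_n)}$ defined on the \emph{same} $\Omega$. The design requirement on the $f_n$ is that they deform the identically vanishing invariant of $X$ into a function along the trace of $\lambda$ having an increasing number of transversal (nondegenerate) zeros. At each such transversal zero the standard front criteria --- the null derivative of the area density vanishing to first order while its derivative along the singular curve does not --- certify a swallowtail, and at the remaining singular points a cuspidal edge; counting these zeros yields the increasing number of swallowtails. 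Here I would also verify that the front (not merely frontal) conditions needed to invoke these criteria are inherited by each $X_n$ from the representation.

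For convergence I would use that the $f_n$ tend, uniformly with their derivatives on compact subsets of $\Omega$, to the limiting scaling, so that $(g_n,\omega_n)\to(g,\omega)$ uniformly on compacta; holomorphicity then upgrades this to convergence of all derivatives, the primitives $\int\Phi_{(g_n,\omega_n)}$ converge, and taking real parts gives $X_n\to X$ in $\|\,\cdot\,\|_\Omega$. The main obstacle --- and the reason the notion of a sequence of scaling functions is isolated in Definition \ref{lemma:f_n} --- is the tension between the two demands: to create arbitrarily many swallowtails the invariant must oscillate more and more along the trace of $\lambda$, yet for convergence the perturbation of the data must shrink, and one must simultaneously guarantee that every $X_n$ extends holomorphically over the common fixed $\Omega$, so that the radius of extension does not collapse as $n\to\infty$. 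Reconciling the growing number of transversal zeros with the decaying size of the perturbation on a single domain is the crux, and I expect it to be managed entirely through the quantitative properties built into the $f_n$.
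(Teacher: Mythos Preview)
Your plan is correct and matches the paper's approach. The only points left implicit are the explicit perturbation $\alpha_n'=\alpha'+f_n\beta$, $\beta_n=\beta-f_n\alpha'$ (so that $\psi_n:=\alpha_n'-i\beta_n$ differs from $\phi$ by exactly $ig_n$ on the trace of $\lambda$, whence the swallowtail check at each $t_{n_k}$ is Proposition~\ref{prop:Swallotail} verbatim), and that convergence needs no derivative estimates at all: item~(5) of Definition~\ref{lemma:f_n} already gives $\|g_n\|_\Omega\to 0$, so $\|X_n-X\|_\Omega\le L\,\|g_n\|_\Omega\to 0$ by a direct integral bound.
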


A significant class of maxface (defined locally) can be constructed using the singular Bj\"{o}rling problem introduced by Kim and Yang in \cite{Kim2007} and also discussed in \cite{RPR2016}.   Given a null curve $\gamma$ and a null vector field on the real axis or the unit circle,  in  \cite{RPR2016}, \cite{Kim2007}, authors have constructed the maxface.  In section 3, we start with the singular Bj\"{o}rling problem when we expect singularities lie on the trace of a non-constant smooth curve $\lambda$.  Further in section 3, we give necessary and sufficient conditions (in propositions: \ref{Prop:Cone-likeLambda}, \ref{prop:Swallotail}) on the singular Bj\"{o}rling data (defined on the trace of $\lambda$) such that singularity is of swallowtail or generalized cone-like.

In section 4, we prove theorem \ref{thm:main} and an example of a sequence of maxfaces having an increasing number of swallowtails converging to the Lorentzian Catenoid.    All the discussion is local that is near a singular curve.


\begin{figure}[t]
\centering
\includegraphics[scale=0.5]{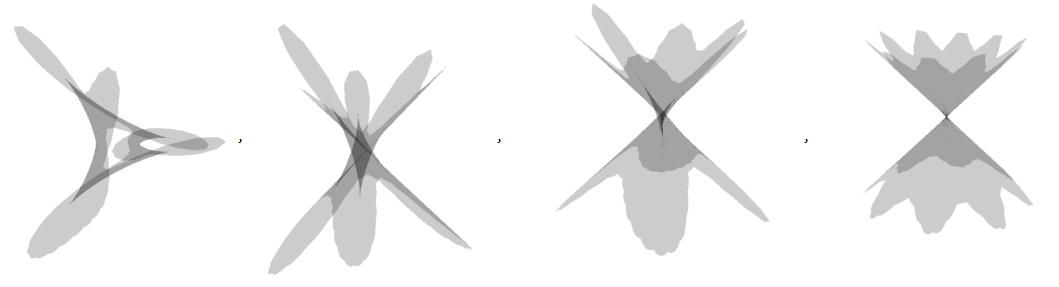}
\caption{Maxfaces converging to the Lorentzian Catenoid}
\label{fig:introduction}
\end{figure}

\section{Preliminaries}
The Lorentz-Minkowski space $\mathbb E_1^3$ is a vector space $\mathbb R^3$ with metric $\langle\; ,\;\rangle:\mathbb R^3 \times \mathbb R^3\to \mathbb R$ defined by $\langle (a_1, b_1, c_1), (a_2, b_2, c_2)\rangle:=a_1a_2+b_1b_2-c_1c_2,
$
where $(a_1, b_1, c_1)$ and $(a_2, b_2, c_2)$ are two vectors in $\mathbb R^3.$  In the following, we write the Weierstrass-Enneper representation of the maxface given by Umehara and Yamada \cite{UMEHARA2006}.

\subsection{Weierstrass Enneper representation} Let $M$ be a Riemann surface, the map $X:M\to \mathbb E_1^3$ be a maxface if and only if there is a pair $(g, f)$
 of meromorphic function and a holomorphic 1-form on $M$ such that $(1+|g|^2)^2|f|^2$ gives a positive definite Riemannian metric on $M$, and $|g|$ is not identically equal $1$.  Moreover for the map  $
\Phi:=\left((\frac{1}{2}(1+g^2), \frac{i}{2}(1-g)^2, -g)f\right)
$,  $
Re\int_{\gamma_j}\Phi=0, \;\;\;\;(j=1,..,N)$
for loops $\{\gamma_j\}_{j=1}^N$ such that $[\gamma_j]$ are generators of $\pi_1(M)$  and
$X(p)=Re\int_o^p\Phi$.

\subsection{Singularities}
Various singularities appear on the maxface. Here we review a few of these. We start by recalling the definition of  $\mathcal{A}-$ equivalence  as in \cite{Fujimori2007}, \cite{UMEHARA2006}.

Two smooth maps $X: \Omega\subset\mathbb{R}^2 \to \mathbb E_1^3$ and $Y:\mathcal{V}\subset \mathbb{R}^2\to \mathbb E_1^3$ are said to be $\mathcal{A}-$equivalent at the points $p\in\Omega$ and $q\in \mathcal{V}$ if there exists a local diffeomorphism $\eta$ of $\mathbb{R}^2$ with $\eta(p)=q$ and a local diffeomorphism $\Phi$ of $\mathbb E_1^3$ with $\Phi(X(p))=Y(q)$ such that $Y=\Phi\circ X \circ \eta^{-1}$.

\begin{definition}[Swallowtails \cite{Fujimori2007}, \cite{UMEHARA2006}]
A maxface $X:\Omega\subset \mathbb C\to\mathbb E_1^3$  is said to have a swallowtail at $p\in \Omega$ if at $p\in \Omega$ and $(0,0)\in \mathbb{R}^2$, $X$ is $\mathcal{A}$-equivalent to
$
f_{sw}(u,v)=(3u^4+u^2v,\, 4u^3+2uv,\, v) \,;\, (u,v)\in \mathbb{R}^2$.
\end{definition}

\begin{definition}[Shrinking singularity \cite{Kim2007}]
For a maxface $X: \Omega\to \mathbb E_1^3$,
we call $p\in \Omega$ a shrinking singular point if there is some $U_p \subset \Omega$ and a regular embedded curve $\gamma: I \subset U_p\to \Omega$, such that every point of $\gamma[I]$ singularity and $X \circ \gamma$ is a single point.
\end{definition}

\begin{definition}[Generalized cone-like singularity and cone-like singularity \cite{Fujimori2009}]
Let $\sum_0$ be the connected component of the set of all admissible singular points on a generalized maximal immersion $X:\Omega\to \mathbb E_1^3$. Each point of $\sum_0$ is called a generalized cone-like singular point if $\sum_0$ is compact and the image $X(\sum_0)$ is a single point. Moreover, if there is a neighborhood $U$ of $\sum_0$ such that $X(U\setminus \sum_0)$ is embedded, then each point of $\sum_0$ is called a cone-like singular point.
 \end{definition}

If the trace of $\lambda$ is compact and it is a connected component of the singularity, then the shrinking singularity becomes generalized cone-like. For examples and detailed discussion, we refer to \cite{Fujimori2007}, \cite{Kim2007}, \cite{UMEHARA2006} etc.

For a maxface  $X: \Omega\to \mathbb E_1^3$ with the Weierstrass data $\{g,f\}$, the authors in \cite{Fujimori2009}, \cite{Kim2007},  \cite{UMEHARA2006} have given a very useful criterion to check the nature of singularity at a particular point.  For a maxface, if $p$ is a singularity, then there exists a curve $\lambda$ on a neighborhood of $p$ such that every point on the trace of $\lambda$ is a singularity.  The curve $\lambda$ is said to be a singular curve and the vector $\eta$ along $\lambda$ is said to null curve if $dX_{\lambda(t)}(\eta)=0$.
Umehara and Yamada \cite{UMEHARA2006} have shown that in a maxface, such direction is uniquely determined and calculated in terms of Weierstrass data.

Following \cite{UMEHARA2006}, in terms of the Weierstrass data $\{g,f\}$, and in terms of  $\eta$, $\lambda$,  we  recall the  functions:
\begin{definition}[\cite{UMEHARA2006}]\label{DefnAH}
 On a neighborhood $U_p$ of a singularity $p$, we define  $A(z):= \frac{g'}{g^2 f}$.

Since $\lambda$ is a singular curve parameterized on $I$, we define $H(t)= Det(\lambda^\prime(t), \eta(t))$ for all $t \in I$.
\end{definition}

Moreover\cite{UMEHARA2006} a maxface becomes front on a neighborhood of singularity if $Re(A(\lambda(t))\neq 0$ at the singular points  and as a front, singularity at $p=\lambda(t)$ is swallowtail if and only if $H^\prime(t)\neq 0$.

\section{Singular Bj\"{o}rling problem and the necessary and sufficient conditions for few singularities}
In this section, we will discuss the singular Bj\"{o}rling problem in a general case when we expect the singularities to lie on the trace of a smooth non-constant curve. It is similar to the discussion in \cite{RPR2016}, \cite{Kim2007}. Further, we will find the necessary and sufficient conditions on the singular Bj\"{o}rling data so that singularities are of generalized cone-like or swallowtails.
\subsection{Singular Bj\"{o}rling problem on a curve}
We start by defining the singular Bj\"{o}ring data similar to the way Kim and Yang have given in  \cite{Kim2007}.

\begin{definition}[Singular Bj\"{o}rling data]\label{defn:singularBjorlingdata} This is a triplet $\{ \lambda,\alpha,\beta\}$, where
\begin{enumerate}
    \item $\lambda: I\to \mathbb{C}$ be a smooth curve such that for all $t\in I$, $\lambda^\prime(t)\neq 0$.
    \item $\alpha$ be a null curve and $\beta $ be a null vector field defined on $I$ and for all $t\in I$, $\langle \alpha^\prime(t), \beta(t)\rangle=0$. Moreover, either $\alpha^\prime(t)\neq 0$ or $\beta(t)\neq 0$ for all $t \in I$.
    \item  The map given by $\phi(\lambda(t))= \alpha'(t) - i\beta(t)$ has an extension $\phi(z)$ on the trace of $\lambda$.
    \item Analytic extension $g(w)$ of  $g(\lambda(t))= G(t)$  where
\begin{equation}\label{defn:Gt}
G(t):=\begin{cases}
\frac{\alpha_1^\prime(t)+i\alpha_2^\prime(t)}{\alpha_3^\prime(t)}\; \text{ if } \alpha^\prime(t)\neq 0\\
\frac{\beta_1(t)+i\beta_2(t)}{\beta_3(t)} \text{ if } \beta_3(t)\neq 0
\end{cases}
\end{equation}
and $|g(w)|$ is not identically equal to 1.
\end{enumerate}

\end{definition}

For the data $\{\lambda,\alpha,\beta\}$, if   $Re\int_{\gamma_j} \phi(z) dz$ vanishes for all loops $\gamma_j$ on a neighborhood  $\Omega$ of trace of $\lambda$. Then the map $X:= (X_1,X_2,X_3) : \Omega\to \mathbb E_1^3$, given by
\begin{equation}\label{eqn:NEWsolSingularBjorline}
X_{\lambda,\alpha,\beta}(z)= Re\left( \int_{\lambda(t_0)}^{z} \phi(w) dw\right)
\end{equation}
is well defined and it gives a generalized maximal immersions on $\Omega$, containing the trace of $\lambda$.  Moreover the trace of $\lambda$ is a subset of the singular set.    The generalized maximal immersion given in the equation \ref{eqn:NEWsolSingularBjorline} is  unique up to translations (we have many choices of $\lambda(t_0)$).

\begin{example}
Let $\lambda(t)= (t,0)$, and $\alpha$ and $\beta$ be real analytic curve and vector field respectively.  Then $\phi(z)= \alpha^\prime(z)-\beta(z)$ is analytic and it gives a generalized maximal immersion. This is the case of the singular Bj\"{o}rling problem as introduced and discussed by Kim and Yang  in \cite{Kim2007}.
\end{example}
\begin{example}
Let $\lambda(t)= e^{it}$, $0\leq t \leq 2 \pi$. Let $\alpha^\prime(t)= (\cos(t), \sin(t),1)$ and $\beta(t)= (0,0,0)$, then
\begin{align*}
 \phi(e^{it})&= (\cos(t), \sin(t), 1)\\
&=\left(\dfrac{e^{it}+e^{-it}}{2}, \dfrac{e^{it}-e^{-it}}{2i},1\right)
\end{align*}
it has analytic extension $\phi(z)= (\frac{z}{2}+\frac{1}{2z}, i(\frac{1}{2z}-\frac{z}{2}),1)$
in a annular region containing unit circle.   It gives a generalized maximal immersion in an annular region that contains the unit circle. 
\end{example}

The generalized maximal immersion as in the equation \ref{eqn:NEWsolSingularBjorline} is a maxface on a neighborhood of trace of $\lambda$. Moreover, for $z=\lambda(t)$,  $f(z) = \phi_1(z) -i\phi_2(z)
 =(\alpha_1^\prime(t)+\beta_2(t)) -i(\alpha_2^\prime(t)+\beta_1(t))$. Therefore $f$ is never zero on Trace of $\lambda$.

The Gauss map is given by
\[g(z)= -\frac{\phi_3(z)}{\phi_1(z)-i\phi_2(z)}=-\frac{\phi_1(z)+i\phi_2(z)}{\phi_3(z)},
\]
and
$g(\lambda(t))= G(t)$, as in the equation \ref{defn:Gt}.
\subsection{Null direction}
Let $X_{\lambda,\alpha,\beta}: \Omega\to \mathbb E_1^3$ be the maxface for a fixed $t_0$ as in the equation \ref{eqn:NEWsolSingularBjorline}.  Since
$dX= \frac{1}{2}X_z dz+ \frac{1}{2} X_{\overline{z}}d\overline{z}= \phi(z)dz+\overline{\phi(z)} d\overline{z}$.
At $z= \lambda(t)$, we have $$dX_{\lambda(t)}= (\alpha^\prime(t)-i\beta(t))dz_{\lambda(t)}+(\alpha^\prime(t)+i\beta(t)) d\bar{z}_{\lambda(t)}$$

So let $v= \eta\frac{\partial}{\partial \bar{z}}+\bar{\eta}\frac{\partial}{\partial\bar{z}}$ such that $dX_{\lambda(t)}(v)=0$, we have
 $(\alpha_3^\prime(t)-i\beta_3(t))\eta+(\alpha_3^\prime(t)+i\beta_3(t)) \overline {\eta}=0
$, that is:
$$\alpha_3^\prime(t).Re(\eta)+\beta_3(t) Im(\eta)=0.$$
Therefore the null direction at each $\lambda(t)$ is given by

\begin{equation}\label{prop:nullDirection}
    \eta(t)= {-\beta_3(t)+i\alpha_3^\prime(t)}.
\end{equation}

\subsection{Calculation for $A(t)$ and $H^\prime(t)$}
In the following, we will calculate $A(\lambda(t))$ and $H^\prime(t)$ as given in the equation \ref{DefnAH}. We write it here again   $$A(\lambda(t))= \frac{G^\prime(t)}{G^2(t)\lambda^\prime(t)f(\lambda(t)},\text{ and } H(t)= Det(\lambda^\prime(t), \eta(t)). $$

We define
\begin{equation}\label{DB_12}
D(\beta_{12}, \beta_{12}^\prime):= \beta_1\beta_2^\prime-\beta_2\beta_1^\prime,\;\; D(\alpha_{12}^\prime, \alpha_{12}^{\prime\prime}):= \alpha_1^\prime\alpha_2^{\prime\prime}-\alpha_2^\prime\alpha_1^{\prime\prime}.
\end{equation}
We have
$g^\prime(\lambda(t)=\frac{G^\prime(t)}{\lambda^\prime(t)}$, where $G$ is given by the equation \ref{defn:Gt}.

For the case when  $\beta_3(t)\neq 0$,
a straight calculation gives,

\[\frac{G^\prime(t)}{G^2(t).f(\lambda(t))}= \frac{-D(\beta_{12},\beta_{12}^\prime)}{(\alpha_3^{\prime 2}+\beta_3^2)\beta_3}+i\frac{\alpha_3^\prime D(\beta_{12},\beta_{12}^\prime)}{(\alpha_3^{\prime 2}+\beta_3^2)\beta_3^2}.\]

Therefore we
\begin{align*}
    A(\lambda(t))&= \left(\frac{g^\prime(\lambda(t)}{g^2(\lambda(t) f(\lambda(t))}\right)=\frac{G^\prime(t)}{G^2(t).f(\lambda(t))}.\frac{\lambda_1^\prime(t)-i\lambda_2^\prime(t)}{\lambda_1^{\prime 2}(t)+\lambda_2^{\prime 2}(t)}\\
    &= \frac{D(\beta_{12},\beta_{12}^\prime)}{(\alpha_3^{\prime 2}+\beta_3^2)\beta_3^2.(\lambda_1^{\prime 2}(t)+\lambda_2^{\prime 2}(t))}.(-\beta_3+i\alpha_3^\prime).(\lambda_1^\prime(t)-i\lambda_2^\prime(t))\\
    &=\frac{D(\beta_{12},\beta_{12}^\prime)}{(\alpha_3^{\prime 2}+\beta_3^2)\beta_3^2.(\lambda_1^{\prime 2}+\lambda_2^{\prime 2})}\left((-\beta_3\lambda_1^\prime+\alpha_3^\prime\lambda_2^\prime)+i. (\alpha_3^\prime\lambda_1^\prime+\beta_3\lambda_2^\prime) \right)
\end{align*}
Similarly,  we can calculate for the case  when   $\alpha_3^\prime(t)\neq 0$ and  we get the following.
\begin{equation}\label{defn:A(t)}
A(\lambda(t)):=\begin{cases}
 \frac{D(\alpha_{12}^\prime,\alpha_{12}^{\prime\prime})}{(\lambda_1^{\prime 2}+\lambda_2^{\prime 2})(\alpha_3^{\prime 2}+\beta_3^2)\alpha_3^{\prime 2}}\left((-\beta_3\lambda_1^\prime+\alpha_3^\prime\lambda_2^\prime)+i.(\alpha_3^\prime\lambda_1^\prime+\beta_3\lambda_2^\prime)\right)\; \text{ if } \alpha^\prime_3(t)\neq 0\\
\frac{D(\beta_{12},\beta_{12}^\prime)}{(\alpha_3^{\prime 2}+\beta_3^2)\beta_3^2.(\lambda_1^{\prime 2}+\lambda_2^{\prime 2})}\left((-\beta_3\lambda_1^\prime+\alpha_3^\prime\lambda_2^\prime)+i. (\alpha_3^\prime\lambda_1^\prime+\beta_3\lambda_2^\prime) \right)\text{ if } \beta_3(t)\neq 0.
\end{cases}
\end{equation}

Now we calculate the function  $H(t)$.  The null direction is given by the equation \ref{prop:nullDirection}. We get
$H(t)= \lambda_1^\prime\alpha_3^\prime+\lambda_2^\prime\beta_3$
and therefore we have
\begin{equation}\label{h(t)}
H^\prime(t)= \lambda_1^{\prime\prime}\alpha_3^\prime+\lambda_1^\prime\alpha_3^{\prime\prime}+\lambda_2^\prime\beta_3^\prime+\lambda_2^{\prime\prime}\beta_3
\end{equation}
\subsection{For shrinking and  generalized cone-like}
The maxface $X_{\lambda,\alpha, \beta}$ as in the equation \ref{eqn:NEWsolSingularBjorline} has shrinking singularity on the trace of $\lambda$ if and only if  $X_{\lambda,\alpha, \beta}(\lambda(t))$ is constant and $\;\;g^\prime(\lambda(t))\neq 0 \;\ $ for all $t\in I$, where $g$ is the Gauss map.

This implies $\forall t \in I$, $DX_{\lambda(t)}(\lambda'(t)) = 0$ if and only if $\forall t \in I, \;\alpha^\prime(t)\lambda_1^\prime(t)+ \beta(t)\lambda_2^\prime(t)=0$ if and only if for all $t\in I, Im(A(t)=0$.

As $g^\prime(\lambda(t)) =\dfrac{G'(t)}{\lambda'(t)} \neq 0$ and $Im(A(t))=0$,
   if and only if $\forall t \in I, Re (A(t)) \neq 0$.

From above we get, $\forall t \in I$, $-\beta_3\lambda'_1 + \alpha_3'\lambda_2' \ne 0$. Therefore $Re(A(t))\neq 0$     if and only if  $D(\beta_1,\beta_2^\prime) \neq 0$ when $\beta_3\neq 0$, and $D(\alpha_{12}^\prime, \alpha_{12}^{\prime\prime})\neq 0$ when $\alpha_3^\prime(t)\neq 0$.

Summarising everything here, we get the following.
\begin{prop}\label{Prop:Cone-likeLambda}
The solution of Bj\"{o}rling problem (as in the equation \ref{eqn:NEWsolSingularBjorline}) on a neighborhood of the trace of $\lambda$, consists of only shrinking singularities on the trace of $\lambda$  if and only if
\begin{enumerate}
 \item $\forall t\in I, \alpha^\prime_3(t)\lambda_1^\prime(t)+ \beta_3(t)\lambda_2^\prime(t)=0$
\item $D(\beta_1,\beta_2^\prime)\neq 0$ when $\beta_3(t)\neq 0$, and  $D(\alpha_{12}^\prime, \alpha_{12}^{\prime\prime})\neq 0$ when $\alpha_3^\prime(t)\neq 0$.  Here $D(.\;,\;.)$ as in the equation \ref{DB_12}.
\end{enumerate}
In particular, if the trace of $\lambda$ is compact, then the trace of $\lambda$ consists of only generalized cone-like singularities.
\end{prop}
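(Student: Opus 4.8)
The plan is to read off the two stated conditions as the vanishing of the imaginary part and the non-vanishing of the real part of the already-computed quantity $A(\lambda(t))$, together with the null structure of the Bj\"{o}rling data. I will use the characterization recalled just above the statement: the trace of $\lambda$ consists of shrinking singular points precisely when $X\circ\lambda$ is constant and $g'(\lambda(t))\neq 0$ for every $t\in I$. The first step is to differentiate the image curve. From $dX=\phi\,dz+\overline{\phi}\,d\bar z$ and $\phi(\lambda(t))=\alpha'(t)-i\beta(t)$ one gets $\tfrac{d}{dt}X(\lambda(t))=2\,\mathrm{Re}\big((\alpha'(t)-i\beta(t))\lambda'(t)\big)=2\big(\alpha'(t)\lambda_1'(t)+\beta(t)\lambda_2'(t)\big)=:V(t)\in\mathbb E_1^3$, so that $X\circ\lambda$ is constant if and only if $V\equiv 0$.

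The key step, and the only point at which the Lorentzian signature is essential, is to show that $V(t)=0$ is equivalent to the vanishing of its third component alone, which is exactly condition (1). Using that $\alpha$ is a null curve, $\beta$ a null field, and $\langle\alpha',\beta\rangle=0$, I would expand $\langle V,V\rangle=\lambda_1'^2\langle\alpha',\alpha'\rangle+2\lambda_1'\lambda_2'\langle\alpha',\beta\rangle+\lambda_2'^2\langle\beta,\beta\rangle=0$; thus $V$ is itself null, i.e. $V_1^2+V_2^2-V_3^2=0$. Hence $V_3=\alpha_3'\lambda_1'+\beta_3\lambda_2'=0$ forces $V_1=V_2=0$, so $V\equiv0$ is equivalent to condition (1). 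Comparing with the formula for $A(\lambda(t))$, whose bracket has imaginary part $\alpha_3'\lambda_1'+\beta_3\lambda_2'$, condition (1) is the same as $\mathrm{Im}\,A(\lambda(t))=0$ once the scalar prefactor is non-zero.

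It remains to match the non-degeneracy requirements. Since $A=g'/(g^2f)$ with $g^2f\neq0$ on the trace, $g'(\lambda(t))\neq0$ is equivalent to $A(\lambda(t))\neq0$; combined with $\mathrm{Im}\,A=0$ this yields $\mathrm{Re}\,A(\lambda(t))\neq0$, and in particular the real part $-\beta_3\lambda_1'+\alpha_3'\lambda_2'$ of the bracket is non-zero. Reading the prefactor off the two branches of the $A$-formula then shows that $\mathrm{Re}\,A\neq0$ is equivalent to $D(\beta_{12},\beta_{12}')\neq0$ when $\beta_3\neq0$ and to $D(\alpha_{12}',\alpha_{12}'')\neq0$ when $\alpha_3'\neq0$, which is condition (2); running these implications backwards gives the converse, since (1) and (2) force $V\equiv0$ and $A\neq0$, hence $g'\neq0$. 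Finally, if the trace of $\lambda$ is compact then $\lambda(I)$ is a compact connected set on which $X$ is constant, so $X(\lambda(I))$ is a single point; by the remark that a compact shrinking component is generalized cone-like, the last assertion follows. I expect the null-vector collapse in the second paragraph to be the only genuinely non-routine point, the remainder being bookkeeping with the formula for $A(\lambda(t))$ already derived.
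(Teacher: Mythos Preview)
Your proposal is correct and follows essentially the same route as the paper: identify the shrinking condition with $X\circ\lambda$ constant and $g'\neq 0$, translate the first into $\mathrm{Im}\,A=0$ and the second into $\mathrm{Re}\,A\neq 0$, and then read off conditions (1) and (2) from the explicit formula for $A(\lambda(t))$. Your null-vector collapse argument (that $\langle V,V\rangle=0$ forces $V_3=0\Rightarrow V=0$) actually supplies a justification the paper leaves implicit when it passes from the vector equation $\alpha'\lambda_1'+\beta\lambda_2'=0$ to its third component; the only small point to make explicit in your converse is that condition (1) together with $(\alpha_3',\beta_3)\neq(0,0)$ and $\lambda'\neq 0$ already forces $-\beta_3\lambda_1'+\alpha_3'\lambda_2'\neq 0$, so that (2) alone controls the prefactor of $\mathrm{Re}\,A$.
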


\begin{example}\label{exampls:cone-likeonunitcircle}When $\lambda(t)=e^{it}$, $0\leq t<2\pi$,
Let $\alpha^\prime(t)=\cos t(\cos t, \sin t, -1)$, and $\beta(t)= \sin t (\cos t, \sin t, -1)$. For this singular Bj\"{o}rling data, the maxface as in the equation \ref{eqn:NEWsolSingularBjorline}, satisfies conditions of the proposition \ref{Prop:Cone-likeLambda}. Moreover the trace of $\lambda$ is unit circle. Therefore every point of the unit circle is a generalized cone-like singularity (in fact it is cone-like singularity).

The Weierstrass data $\{g,w\}$ for this maxface is $\{g(z)= z, w(z)= \frac{1}{z^2}\}$. This is the Lorentzian Catenoid \cite{KOBAYASHI1983}.
\end{example}
\begin{example}\label{shrinkingbaseexample}
Let $\lambda(t)= (t,0), a<t<b$, and  $\alpha(t)$ be a constant curve.  Let $\beta$ be a real null analytic vector field defined on $I$ such that $\forall t$,$ (\beta_1^\prime\beta_2-\beta_2\beta_1^\prime) \neq 0$ then solution as in equation \ref{eqn:NEWsolSingularBjorline} gives shrinking singularity.

In particular, if $\beta (t)=(1-t^2, 2t, 1+t^2) $ then we see that for all $t\in\mathbb R$, $(\beta_1\beta_2^\prime-\beta_2\beta_1^\prime)\neq 0$. Therefore, this $\beta$ with any constant $\alpha$ gives a maxface.

Moreover, the Weierstrass data for this maxface (equation \ref{eqn:NEWsolSingularBjorline}) is
$$\left\{g(z)=-\frac{1-z^2+2iz}{1+z^2}, w(z)= -2z-i(1-z^2)\right\}$$

\end{example}

To treat separately the case, when $\lambda(t)= (t,0)$ and every point $(t,0)$ is a shrinking singularity, we write the following particular case as the  corollary.
\begin{cor}\label{cor:shrinking}
Let $\lambda(t)= (t,0)$ then the solution of singular Bj\"{o}rling problem as in the equation \ref{eqn:NEWsolSingularBjorline} consist of shrinking singularity on a neighborhood of trace of $\lambda$ if and only if
\begin{enumerate}
 \item $\alpha$ is a constant curve.
\item and for all t,$ (\beta_1^\prime\beta_2-\beta_2\beta_1^\prime) \neq 0$
\end{enumerate}
\end{cor}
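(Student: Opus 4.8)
The plan is to read Corollary~\ref{cor:shrinking} off Proposition~\ref{Prop:Cone-likeLambda} by substituting the specific curve $\lambda(t)=(t,0)$, for which $\lambda_1'(t)\equiv 1$ and $\lambda_2'(t)\equiv 0$. Under this substitution the first condition of Proposition~\ref{Prop:Cone-likeLambda}, namely $\alpha_3'(t)\lambda_1'(t)+\beta_3(t)\lambda_2'(t)=0$, collapses to $\alpha_3'(t)=0$ for all $t\in I$. This is the one place where the hypothesis that $\alpha$ is a null curve does genuine work: since $\langle\alpha'(t),\alpha'(t)\rangle=\alpha_1'(t)^2+\alpha_2'(t)^2-\alpha_3'(t)^2=0$, plugging in $\alpha_3'(t)=0$ forces $\alpha_1'(t)^2+\alpha_2'(t)^2=0$, and because these are real quantities we conclude $\alpha_1'(t)=\alpha_2'(t)=0$ as well. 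Hence $\alpha'(t)\equiv 0$, i.e. $\alpha$ is constant, which is exactly condition (1) of the corollary. Conversely, if $\alpha$ is constant then $\alpha_3'\equiv 0$, and condition (1) of Proposition~\ref{Prop:Cone-likeLambda} holds automatically for $\lambda=(t,0)$.

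Next I would pin down which branch of condition (2) of Proposition~\ref{Prop:Cone-likeLambda} is active. Once $\alpha$ is constant, the admissibility of the Bj\"{o}rling data (the requirement in Definition~\ref{defn:singularBjorlingdata} that $\alpha'(t)\neq 0$ or $\beta(t)\neq 0$) forces $\beta(t)\neq 0$ for every $t$; combined with the null condition $\langle\beta(t),\beta(t)\rangle=\beta_1^2+\beta_2^2-\beta_3^2=0$, a vanishing $\beta_3(t)$ would give $\beta_1(t)=\beta_2(t)=0$ and hence $\beta(t)=0$, a contradiction. Therefore $\beta_3(t)\neq 0$ throughout, so we are always in the $\beta_3\neq 0$ case and never in the $\alpha_3'\neq 0$ case. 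Condition (2) of the proposition then reduces to its single relevant clause $D(\beta_{12},\beta_{12}')\neq 0$, that is $\beta_1\beta_2'-\beta_2\beta_1'\neq 0$ (the quantity defined in equation~\ref{DB_12}), which matches condition (2) of the corollary.

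Assembling the two directions completes the argument: the solution of the singular Bj\"{o}rling problem of equation~\ref{eqn:NEWsolSingularBjorline} for $\lambda=(t,0)$ has only shrinking singularities along the trace of $\lambda$ if and only if $\alpha$ is constant and $\beta_1\beta_2'-\beta_2\beta_1'\neq 0$ for all $t$. I do not expect a serious obstacle, as the corollary is a genuine special case of the proposition; the only step requiring care -- and the one I would flag -- is the passage from $\alpha_3'\equiv 0$ to $\alpha$ constant, which uses the null-curve hypothesis and not condition (1) alone. It is worth checking in passing that the remaining data constraints, namely $\langle\alpha'(t),\beta(t)\rangle=0$ together with the analytic-extension requirements, stay consistent with $\alpha$ constant and $\beta$ null, which they trivially do since $\langle 0,\beta(t)\rangle=0$.
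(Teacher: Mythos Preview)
Your proof is correct and follows the same approach as the paper, which simply presents the corollary as the specialization of Proposition~\ref{Prop:Cone-likeLambda} to $\lambda(t)=(t,0)$ without further argument. Your explicit use of the null-curve condition on $\alpha$ to pass from $\alpha_3'\equiv 0$ to $\alpha'\equiv 0$, and your verification that $\beta_3(t)\neq 0$ is always the active branch, fill in details the paper leaves implicit.
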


\subsection{For Swallowtails}
Here we give necessary and sufficient conditions on the singular Bj\"{o}rling data as in the definition \ref{defn:singularBjorlingdata}, such that $X_{\lambda,\alpha,\beta}$ has swallowtails at some points on the trace of $\lambda$

Let $\{\lambda,\alpha, \beta\}$ be the singular Bj\"{o}rling data and $X_{\lambda,\alpha,\beta}$ be its solution as in the equation \ref{eqn:NEWsolSingularBjorline} on a neighborhood of trace of $\lambda$.  We denote it by $X$.

We know at $p=\lambda(t)$, $X$ has swallowtails if and only if at $p=\lambda(t)$,
Re$(A(\lambda(t))\neq 0$,  Im$(A(\lambda(t)))= 0$, and $H^\prime(t)\neq 0$.  Here $A$ and $H$ are functions as in the definition \ref{DefnAH}. At point $p=\lambda(t)$,  $A(\lambda(t))$ and $H^\prime(t)$ are given by equations \ref{defn:A(t)} and \ref{h(t)}

Putting everything together, we have the following criterion.

\begin{prop}\label{prop:Swallotail}
At $\lambda(t)$, solution of the Singular Bj\"{o}rling problem as in the equation \ref{eqn:NEWsolSingularBjorline} has Swallowtails if and only if at $t$
\begin{enumerate}
\item $\alpha_3^\prime\lambda_1^\prime+\beta_3\lambda_2^\prime= 0$ and
\item $\lambda_1^{\prime\prime}\alpha_3^\prime+\lambda_1^\prime\alpha_3^{\prime\prime}+\lambda_2^\prime\beta_3^\prime+\lambda_2^{\prime\prime}\beta_3\neq 0$
\item when $\alpha^\prime(t)\neq 0$, $D(\alpha_{12}^\prime,\alpha_{12}^{\prime\prime})\neq 0$, and when $\beta^\prime(t)\neq 0$, $D(\beta_{12},\beta_{12}^{\prime})\neq 0$
\end{enumerate}
\end{prop}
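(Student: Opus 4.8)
The plan is to read the three defining conditions of a swallowtail straight off the closed-form expressions for $A(\lambda(t))$ and $H'(t)$ already computed in equations \ref{defn:A(t)} and \ref{h(t)}. Recall the criterion quoted just before the statement: $X = X_{\lambda,\alpha,\beta}$ has a swallowtail at $p=\lambda(t)$ if and only if
$$\mathrm{Re}\,A(\lambda(t)) \neq 0, \qquad \mathrm{Im}\,A(\lambda(t)) = 0, \qquad H'(t) \neq 0$$
hold simultaneously. By equation \ref{h(t)} the last condition $H'(t)\neq 0$ is \emph{literally} condition (2), so the entire task is to translate the two conditions on $A$ into conditions (1) and (3).

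First I would record the structural feature of formula \ref{defn:A(t)}: in either branch ($\alpha_3'(t)\neq 0$ or $\beta_3(t)\neq 0$) one can write
$$A(\lambda(t)) = C(t)\Big[(-\beta_3\lambda_1' + \alpha_3'\lambda_2') + i(\alpha_3'\lambda_1' + \beta_3\lambda_2')\Big],$$
where the scalar $C(t)$ is \emph{real} and equals $D(\alpha_{12}', \alpha_{12}'')$ (resp.\ $D(\beta_{12}, \beta_{12}')$) divided by a product of the strictly positive quantities $\lambda_1'^2+\lambda_2'^2$, $\alpha_3'^2+\beta_3^2$, and $\alpha_3'^2$ (resp.\ $\beta_3^2$). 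Since $C(t)$ is real, splitting real and imaginary parts gives
$$\mathrm{Re}\,A(\lambda(t)) = C(t)(-\beta_3\lambda_1' + \alpha_3'\lambda_2'), \qquad \mathrm{Im}\,A(\lambda(t)) = C(t)(\alpha_3'\lambda_1' + \beta_3\lambda_2').$$
Because all denominators of $C(t)$ are nonzero, $C(t)\neq 0$ is equivalent to the relevant $D(\cdot,\cdot)$ being nonzero, i.e.\ to condition (3).

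With this in hand I would run a short Boolean argument in both directions. Forward: $\mathrm{Re}\,A\neq 0$ forces $C(t)\neq 0$, which is (3); then $\mathrm{Im}\,A = 0$ together with $C(t)\neq 0$ forces the imaginary factor $\alpha_3'\lambda_1' + \beta_3\lambda_2'$ to vanish, which is condition (1). Conversely, assuming (1) and (3) gives $C(t)\neq 0$, and (1) makes the imaginary factor vanish, so $\mathrm{Im}\,A = 0$. It then remains to check $\mathrm{Re}\,A\neq 0$, i.e.\ that the real factor does not vanish; here I would observe
$$-\beta_3\lambda_1' + \alpha_3'\lambda_2' = \det\begin{pmatrix}\alpha_3' & \beta_3 \\ \lambda_1' & \lambda_2'\end{pmatrix},$$
and use that (1) says $(\alpha_3',\beta_3)$ is Euclidean-orthogonal to the nonzero vector $(\lambda_1',\lambda_2')$, while the null direction $\eta = -\beta_3 + i\alpha_3'$ of \ref{prop:nullDirection} is nonzero at a singular point, so $(\alpha_3',\beta_3)\neq(0,0)$. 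A nonzero $(\alpha_3',\beta_3)$ orthogonal to $(\lambda_1',\lambda_2')$ is a nonzero multiple $c(-\lambda_2',\lambda_1')$, whence the determinant equals $-c(\lambda_1'^2+\lambda_2'^2)\neq 0$, giving $\mathrm{Re}\,A\neq 0$ and closing the converse.

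The main obstacle I anticipate is bookkeeping rather than depth: I must keep the two branches of \ref{defn:A(t)} aligned with the two clauses of (3), and reconcile the case split there ($\alpha_3'\neq 0$ versus $\beta_3\neq 0$) with the phrasing ``$\alpha'(t)\neq 0$''/``$\beta'(t)\neq 0$'' of the proposition, using that the definition \ref{defn:Gt} of $G$ already forces the relevant third component to be nonzero in each branch. The one genuinely non-formal point is the nonvanishing of the real factor $-\beta_3\lambda_1'+\alpha_3'\lambda_2'$ in the converse; it is essential to invoke $\eta\neq 0$ (equivalently $(\alpha_3',\beta_3)\neq(0,0)$) there, since condition (1) alone would be vacuously compatible with $\mathrm{Re}\,A = 0$.
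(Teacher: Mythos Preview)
Your proposal is correct and follows exactly the paper's approach: the paper's proof consists only of the sentence ``Putting everything together, we have the following criterion,'' meaning one reads conditions (1)--(3) directly off the formulas \ref{defn:A(t)} and \ref{h(t)} via the Umehara--Yamada criterion $\mathrm{Re}\,A\neq 0$, $\mathrm{Im}\,A=0$, $H'\neq 0$. Your version is simply a more explicit unpacking of that sentence; in particular, your observation that $(\alpha_3',\beta_3)\neq(0,0)$ and condition (1) together force the real factor $-\beta_3\lambda_1'+\alpha_3'\lambda_2'$ to be nonzero is the same argument the paper uses (tersely) in the paragraph preceding Proposition~\ref{Prop:Cone-likeLambda}.
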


In particular,for $\lambda(t)= (t,0)$ and trace of $\lambda$ has shrinking singularity, we have
\begin{cor}\label{swallotailin(t,0)}
Solution of the singular Bj\"{o}rling problem , when $\lambda(t)=(t,0)$,  has swallowtails at $\lambda(t)$ if and only if at $t$

$\alpha_3^\prime=0$, $\beta_3$ and $\alpha^{\prime \prime}_3$ are not zero and
 $ D(\beta_{12},\beta_{12}^{\prime})\neq 0$
\end{cor}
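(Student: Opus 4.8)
The plan is to derive Corollary~\ref{swallotailin(t,0)} as the pointwise specialization of Proposition~\ref{prop:Swallotail} to the straight singular curve $\lambda(t)=(t,0)$. For this curve one has $\lambda_1'(t)=1$, $\lambda_2'(t)=0$ and $\lambda_1''(t)=\lambda_2''(t)=0$, so the whole task reduces to substituting these four values into the three conditions of the proposition and simplifying. I would proceed condition by condition, and then read the resulting equivalences backwards to obtain the converse direction.

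First I would treat condition~(1). Substituting $\lambda_1'=1$, $\lambda_2'=0$ into $\alpha_3'\lambda_1'+\beta_3\lambda_2'=0$ collapses it to $\alpha_3'(t)=0$, which is the first assertion of the corollary. Next, recall from \eqref{prop:nullDirection} that the null direction along $\lambda$ is $\eta(t)=-\beta_3(t)+i\alpha_3'(t)$, and that a null direction is by construction nonzero; hence $\alpha_3'=0$ forces $\beta_3(t)\neq 0$. This supplies the requirement that $\beta_3$ be nonzero, and, more importantly, it selects the second ($\beta_3\neq 0$) branch of the piecewise formula~\eqref{defn:A(t)} for $A(\lambda(t))$, ruling out the $\alpha_3'\neq 0$ branch.

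Then I would handle conditions~(2) and~(3). Under the substitution, the expression~\eqref{h(t)} for $H'(t)=\lambda_1''\alpha_3'+\lambda_1'\alpha_3''+\lambda_2'\beta_3'+\lambda_2''\beta_3$ reduces to $H'(t)=\alpha_3''(t)$, so $H'(t)\neq 0$ is precisely $\alpha_3''(t)\neq 0$. For condition~(3) I would evaluate the $\beta_3\neq 0$ branch of~\eqref{defn:A(t)} at $\lambda_1'=1$, $\lambda_2'=0$, $\alpha_3'=0$: the two bracketed entries become $-\beta_3$ and $0$ respectively, so $A(\lambda(t))$ collapses to the real number $-D(\beta_{12},\beta_{12}')/\beta_3^{3}$. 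Consequently $\mathrm{Im}\,A(\lambda(t))=0$ holds automatically, consistently with condition~(1), while $\mathrm{Re}\,A(\lambda(t))\neq 0$ is equivalent to $D(\beta_{12},\beta_{12}')\neq 0$, the last assertion of the corollary. Reading this chain of equivalences in reverse shows that $\alpha_3'=0$, $\alpha_3''\neq 0$ (whence $\beta_3\neq 0$) and $D(\beta_{12},\beta_{12}')\neq 0$ together yield $\mathrm{Re}\,A\neq 0$, $\mathrm{Im}\,A=0$ and $H'\neq 0$, i.e.\ a swallowtail, which gives the converse.

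The computation is elementary, so I do not expect a serious obstacle; the single point that warrants care is the branch selection in~\eqref{defn:A(t)}, namely the argument that $\alpha_3'=0$ together with the nondegeneracy of the null direction genuinely places us in the $\beta_3\neq 0$ branch, which is exactly why only $D(\beta_{12},\beta_{12}')\neq 0$---and not its $\alpha$-analogue---survives in the corollary. I would also insert one clarifying sentence on the phrase ``trace of $\lambda$ has shrinking singularity'': by Corollary~\ref{cor:shrinking} a purely shrinking straight curve forces $\alpha$ to be constant, hence $\alpha_3''\equiv 0$, so the swallowtails occur only at the isolated points where $\alpha_3''\neq 0$, that is, where the data depart from the constant-$\alpha$ (pure shrinking) configuration.
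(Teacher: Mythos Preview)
Your proposal is correct and follows exactly the route the paper intends: the corollary is stated immediately after Proposition~\ref{prop:Swallotail} with the lead-in ``In particular, for $\lambda(t)=(t,0)$\ldots'', so the paper's own proof is nothing more than the substitution $\lambda_1'=1$, $\lambda_2'=0$, $\lambda_1''=\lambda_2''=0$ you carry out. Your extra remarks---that $\alpha_3'=0$ together with the nullity of $\alpha'$ and the nondegeneracy condition in Definition~\ref{defn:singularBjorlingdata}(2) force $\beta_3\neq 0$ automatically, and that the prefatory phrase about shrinking singularities is in tension with $\alpha_3''\neq 0$---are valid observations that the paper leaves implicit.
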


With the conditions in the propositions \ref{Prop:Cone-likeLambda}, \ref{prop:Swallotail},  in the following,  we have given a family of maxfaces $X_n$ defined in an  annular region containing the unit circle as the singularity. Moreover for each $n$ there is an increasing number of swallowtails on the unit circle.

\begin{example}\label{examp:Seqnswallotail1}
For $n\geq 1$, consider the Bj\"{o}rling data on  $\lambda(t)= e^{it},\; 0\leq t\leq 2\pi.$
as $\alpha_n^\prime(t)= \cos t(\cos t,\sin t,-1)$   and $\beta_n(t)= \left(\sin t+\frac{\cos nt}{n}\right)(\cos t,\sin t,-1)$.

We have at each $z= e^{it}$
\begin{align*}
    \alpha_{n3}^\prime\lambda_1^\prime+\beta_{n3}\lambda_2^\prime&= -\frac{1}{n}\cos t \cos nt\\
    \lambda_1^{\prime\prime}\alpha_{n3}^\prime+\lambda_1^\prime\alpha_{n3}^{\prime\prime}+\lambda_2^\prime\beta_{n3}^\prime+\lambda_2^{\prime\prime}\beta_{n3}&= \cos t\sin nt +\frac{1}{n}\sin t \cos nt.
\end{align*}
From the proposition \ref{prop:Swallotail}, we see 
\begin{figure}[htp]
\includegraphics[scale=0.3]{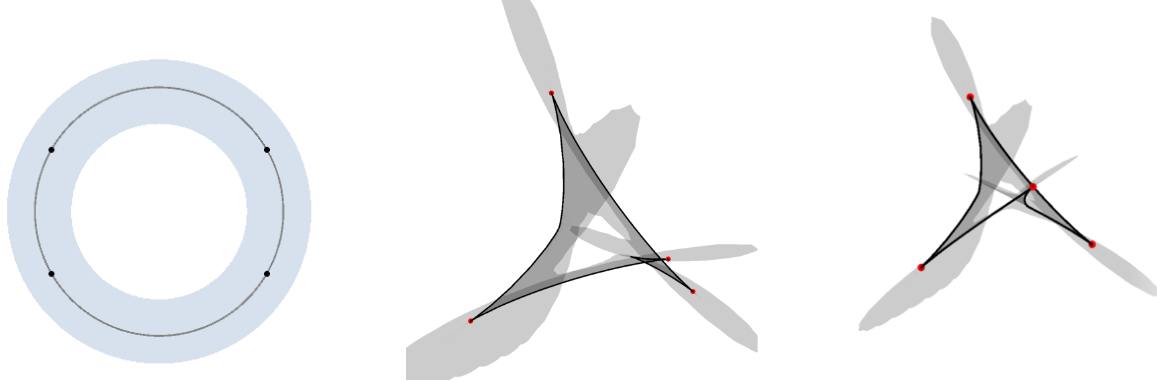}\\
\includegraphics[scale=0.3]{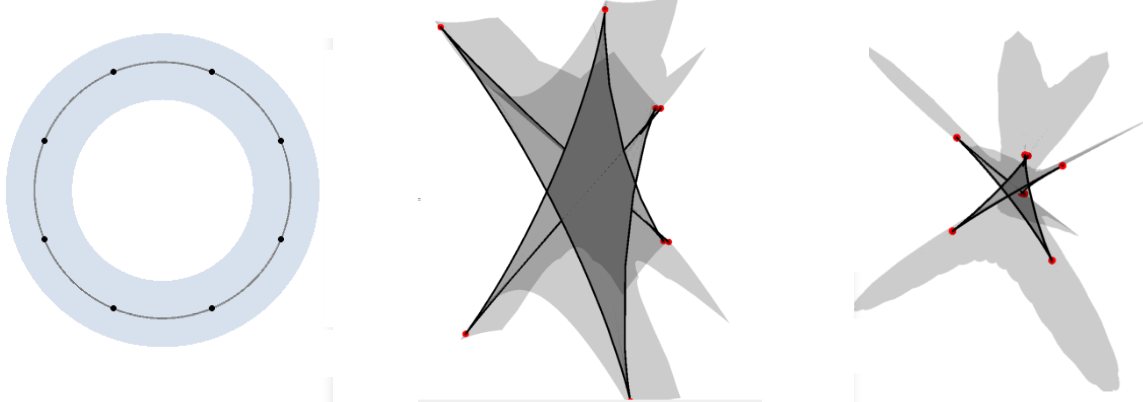}\\
\includegraphics[scale=0.23]{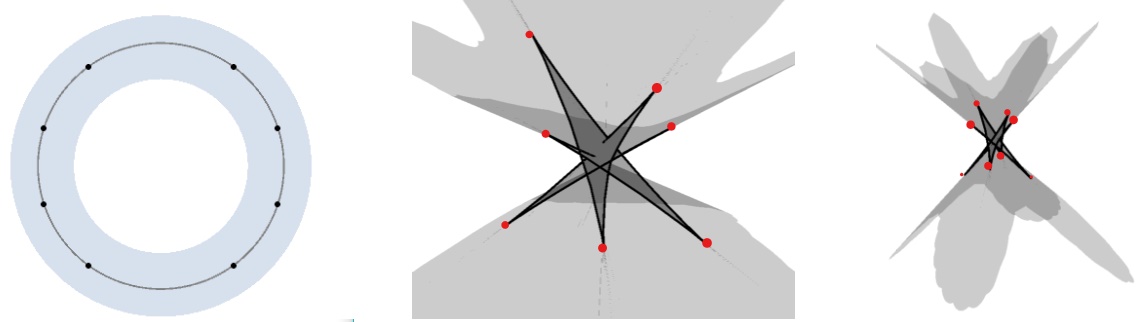}\\
\;\;\;\;\;\;\includegraphics[scale=0.31]{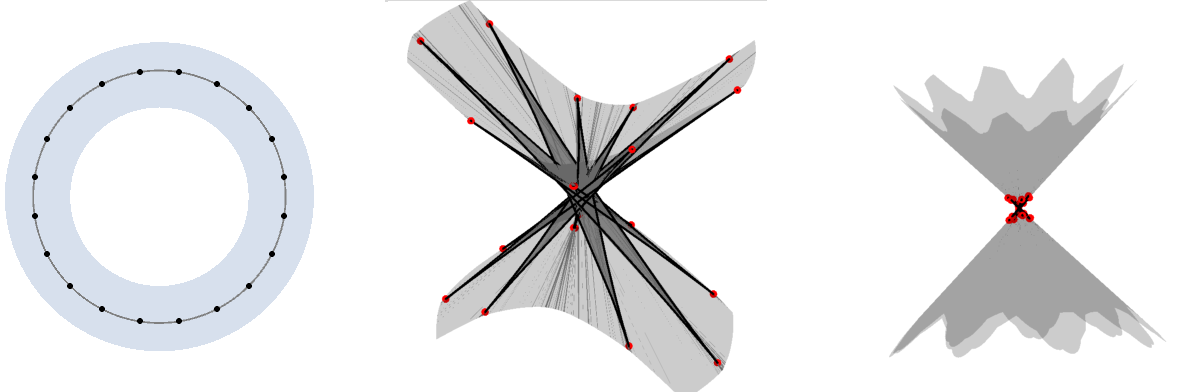}
\caption{A sequence of maxfaces with an increasing number of swallowtails as in the example \ref{examp:Seqnswallotail1}, for n=3, 4, 5 and 10.}
\label{fig:seqofSwallowtils}

\end{figure}
\begin{itemize}
   \item when $n$ is even then $\forall z = e^{it_k}$ where $t_k= \dfrac{(2k+1)\pi}{2n},\;\; k=0,1,....,2n-1$, are swallowtails for these maxfaces and
   \item when $n$ is odd then $\forall z = e^{it_k}$ where $t_k= \dfrac{(2k+1)\pi}{2n},\;\; k=\{0,1,....,2n-1\}\setminus\{\frac{n-1}{2},\frac{3n-1}{2}\}$ are  swallowtails,  So there are in total 2n-2 swallowtails.
\end{itemize}

The Weierstrass data for each $X_{\lambda,\alpha_n,\beta_n} $ is given by
$$\left\{g_n(z)= z, w_n(z)= \left(\frac{1}{z^2}-\frac{i}{2nz}(z^n+z^{-n})\right)\right\}$$

 In the figure \ref{fig:seqofSwallotails}, for various $n$, we have shown the maxfaces in an annular region where these are defined. The dotted points on the unit circle are the points where the respective maxface has swallowtails. In the middle, we have shown the curve where $|z|=1 $ is mapped in $\mathbb E_1^3$ with singular points. The last column is for the  maxfaces for $n=3,4,5,$ and $10$.  This figure captures the essence of this article.

\end{example}

From the above picture, we see this sequence of maxfaces ``converges"  to the  Lorentzian Catenoid.   This example is similar (at least graphically) to Kim and Yang's Toroidal maxface \cite{Kim2006} and Trinoids as in   \cite{Fujimori2009}.
All these motivate us to study the general problem, we talked in the introduction. 

\section{Sequence of swallowtails converging to the generalized cone-like}

Let $\Omega\subset \mathbb C$ be a simply connected domain, $E= \mathbb R^n$ or $\mathbb C^n$ and $X\in C(\overline{\Omega}, E)$, the space continuous maps. For each $z\in \overline{\Omega}$, we denote, $\|X(z)\|= \max\{|X_i(z)|: i=1\ldots n\}$ and  $\|X\|_{\Omega}:=\sup_{z\in \overline{\Omega}}\|X(z)\|.$  The space $C(\overline{\Omega}, E)$ becomes a Banach space under the norm $\|\;.\;\|_{\Omega}$.  For convergence of sequence of maxfaces, we will be using these norms.

\subsection{For the shrinking singularities on $\lambda(t)= (t,0)$}
In the following,  we see that for a maxface $X$ with the Bj\"{o}rling data $\{\lambda(t)= (t,0), \alpha, \beta\}$ and having shrinking singularities on each $(t,0)$,  we have a sequence of maxfaces $X_n$,  with an increasing number of swallowtails, converging to $X$ in a domain containing $\lambda(t)$. Precisely we prove the following-

\begin{prop}
Let $X:\mathcal{U}\to \mathbb E_1^3$ be a maxface, for a singular Burling data $\{\lambda, \alpha, \beta\}$ defined on $\lambda: [a,b]\to \mathcal{U},\; t\to (t,0)$, and $X$ has shrinking singularities on $(t,0)$. Then there is a domain $\Omega$ containing trace of $\lambda$ and a sequence of maxfaces $X_n$, such that
\begin{enumerate}
 \item $X$ and $X_n$ defined on $\Omega$
 \item Each $X_n$ has $n$ swallowtails on the trace of $\lambda$, moreover
\item  $X_n$ converges to $X$ in $\|\;.\;\|_{\Omega}$
\end{enumerate}
\end{prop}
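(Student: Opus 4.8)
The plan is to build $X_n$ by perturbing only the null curve $\alpha$ while keeping $\beta$ fixed — equivalently, by fixing the Gauss map and perturbing the height data, exactly the mechanism visible in the explicit circle family of Example \ref{examp:Seqnswallotail1}, where $g_n(z)=z$ for all $n$ and only $w_n$ varies. The first step is to put the hypothesis into normal form. By Corollary \ref{cor:shrinking} the shrinking assumption forces $\alpha$ to be constant (so $\alpha'\equiv 0$) and $\beta$ to be a real-analytic null field with $D(\beta_{12},\beta_{12}')\neq 0$ on $[a,b]$ (equation \ref{DB_12}). Since $D(\beta_{12},\beta_{12}')\neq 0$ forces $(\beta_1,\beta_2)\neq 0$, and nullity gives $\beta_3^2=\beta_1^2+\beta_2^2$, one has $\beta_3(t)\neq 0$ throughout; hence I write $\beta=\beta_3\,\nu$ with $\nu=(\cos\theta,\sin\theta,1)$ a real-analytic null direction. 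A short computation then gives $D(\beta_{12},\beta_{12}')=\beta_3^2\,\theta'$, so $\theta'(t)\neq 0$ for all $t$.

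Next I introduce the perturbation. Fix $n$ distinct interior nodes $t_1<\dots<t_n$ in $(a,b)$ and set $a_n(z)=\delta_n\prod_{k=1}^n(z-t_k)$, choosing $\delta_n>0$ so small that $\|a_n\|_\Omega\le 1/n$ on a fixed neighborhood $\Omega$ of the trace; define the data $\{\lambda,\alpha_n,\beta\}$ by $\alpha_n'(z)=a_n(z)\,\nu(z)$, leaving $\beta$ unchanged. Because $\alpha_n'$ and $\beta$ are scalar multiples of the same null $\nu$, the nullity and orthogonality requirements of Definition \ref{defn:singularBjorlingdata} hold automatically; the analytic extensions exist since all ingredients are analytic; the period condition is vacuous as $\Omega$ is simply connected; and a direct check with equation \ref{defn:Gt} gives $G_n(t)=e^{i\theta(t)}=G(t)$ in either branch, so the Gauss map $g_n=g$ is unchanged and $|g_n|\not\equiv 1$ is inherited. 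Thus each $\{\lambda,\alpha_n,\beta\}$ solves the Björling problem (equation \ref{eqn:NEWsolSingularBjorline}), giving a maxface $X_n$ on $\Omega$ once $f_n\neq 0$ there; as $f_n\to f$ uniformly and $f\neq 0$ on the compact trace, shrinking $\Omega$ once and decreasing the finitely many offending $\delta_n$ secures $f_n\neq 0$ on $\overline{\Omega}$ for every $n$, so all $X_n$ live on one common $\Omega$.

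The swallowtail count then follows from Corollary \ref{swallotailin(t,0)}. Since $\nu_3\equiv 1$ one has $\alpha_{n,3}'=a_n$ and $\alpha_{n,3}''=a_n'$, so the three conditions read $a_n(t)=0$, together with $\beta_3(t)\neq 0$, $a_n'(t)\neq 0$, and $D(\beta_{12},\beta_{12}')=\beta_3^2\theta'\neq 0$. The first holds precisely at the simple zeros $t_1,\dots,t_n$, where $a_n'(t_k)=\delta_n\prod_{j\neq k}(t_k-t_j)\neq 0$; the last two hold everywhere on $[a,b]$; and $a_n$ has no further zeros, so $X_n$ carries exactly $n$ swallowtails on the trace. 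For convergence, note $\phi_n-\phi=a_n\nu$, whence $\|\phi_n-\phi\|_\Omega\le\|a_n\|_\Omega\,\|\nu\|_\Omega\le\|\nu\|_\Omega/n$; integrating along a bounded path from $\lambda(t_0)$, $X_n-X=\mathrm{Re}\int(\phi_n-\phi)$ yields $\|X_n-X\|_\Omega\le C_\Omega\,\|\phi_n-\phi\|_\Omega\to 0$.

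The step I expect to be the real obstacle is convergence on a \emph{fixed two-dimensional} $\Omega$ rather than merely on the real segment: analytic continuation is not continuous in the sup-norm on $\mathbb{R}$, and indeed the circle family's perturbation $z^n+z^{-n}$ blows up off the unit circle on every fixed annulus. The segment case is tractable precisely because $\prod_k|z-t_k|$ stays bounded on $\overline{\Omega}$, so the tiny leading coefficient $\delta_n$ tames the extension while leaving the $n$ real simple zeros — hence the $n$ swallowtails — intact; this polynomial choice is the concrete instance of the scaling functions of Definition \ref{lemma:f_n} used in the general Theorem \ref{thm:main}. A secondary technical point, already flagged above, is the uniform-in-$n$ control of the domain on which every $X_n$ is simultaneously a maxface, i.e.\ the condition $f_n\neq 0$ on $\overline{\Omega}$.
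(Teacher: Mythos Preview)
Your argument is correct and follows essentially the same route as the paper: keep $\beta$ fixed, set $\alpha_n'$ equal to a small polynomial with $n$ simple real zeros times the null direction of $\beta$, invoke Corollary~\ref{swallotailin(t,0)} at those zeros, and get uniform convergence from the smallness of the polynomial on $\overline{\Omega}$. The only cosmetic differences are that the paper multiplies by $\beta$ itself rather than your normalized $\nu=\beta/\beta_3$, and uses the explicit nodes $t_i=1/i$ with scaling $1/((r+1)^n n)$ instead of your generic $t_k$ and $\delta_n$; your extra checks that $g_n=g$ and that $f_n\neq 0$ on a common $\overline{\Omega}$ are points the paper glosses over.
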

\begin{proof}
Without loss of generality, we take $I=[0,1]$. The maxface $X_{\lambda,\alpha,\beta}$ for the data
$\{\lambda, \alpha,\beta\}$ has shrinking singularities each $(t,0)$, therefore $\alpha$ and $\beta$ satisfy the conditions given in Proposition \ref{Prop:Cone-likeLambda}.

We take $\Omega$ such that $I\subset \Omega\subset \overline{\Omega}\subset \mathcal{U}$ and   $\overline{\Omega}\subset \{z\in \mathbb C:
|z|<r\}$ for some $r>0$.  Since $I$ is closed interval, such $\Omega$ exits.

For natural number $n\geq 1,$  we take a polynomial,
\begin{equation}\label{eqn:functionf_n}
 f_n(z):= \frac{1}{(r+1)^n.n}\prod_{i=1}^n (z-\frac{1}{i})
\end{equation}

We see that for each $z\in \overline{\Omega}$,    $|f_n(z)|<\frac{1}{n}$. Therefore $f_n\to zero$ uniformly on $\overline{\Omega}$.  For each $t\in [0,1]$, we define
\[
\alpha_n(t)= \int_{t_0}^tf_n(t)\beta(t),\;\;\beta_n(t)= \beta(t).\]

For each $n$ ,$\{t\to (t,0), \alpha_n,\beta_n\}$ satisfies to be the data of singular Bj\"{o}rling problem. Moreover at each $t_i= \frac{1}{i}$, these satisfies all the conditions in the corollary \ref{swallotailin(t,0)}.

Therefore each maxface $X_n$ (the solution of singular Bj\"{o}rling problem for $\{\alpha_n, \beta_n\}$) has swallowtails at each $t_i=\frac{1}{i}$.

Moreover  for $z\in \overline{\Omega}$, $\alpha_n^\prime(z)= f_n(z)\beta(z)$ and  each $\beta_i(z)$ is bounded on $\overline{\Omega}$.  Hence on $\overline{\Omega}$, $\|\alpha_n^\prime\|_\Omega \to zero$.

Let fix some point $t_0\in I$ and for $z\in \overline{\Omega}$, let  $|\int_{t_0}^z dw|<L$. We denote $h^k$ for $k$-th co-ordinate function for $h= (h^1,h^2, h^3)$.   

We have , $\forall z\in \overline{\Omega}$,
\begin{align*}
    |X^k_n(z)-X^k(z)|&=\left|Re\int_{t_0}^z(\alpha_n^{k\prime}-i\beta_n^k)-(\alpha^{k\prime}-i\beta^k(t)) dw\right|\\
    &\leq \left|\int_{t_0}^z(\alpha^{k\prime}_n-\alpha^{k\prime})dw\right|+\left|\int_{t_0}^z(\beta_n^k-\beta^k)dw\right|\\
    &\leq L.\left( \|\alpha^\prime_n-\alpha^\prime\|_\Omega+\|(\beta_n-\beta)\|_\Omega\right)
\end{align*}

Therefore we have
$$\|X_n-X\|_\Omega\leq  L.\left( \|\alpha^\prime_n-\alpha^\prime\|_\Omega+\|(\beta_n-\beta)\|_\Omega\right)\leq L\|\alpha_n^\prime\|_\Omega \to zero. $$

This proves that $X_n$ converges to $X$.
\end{proof}
Now in the following, we will discuss a similar situation for arbitrary $\lambda$.  For that, we give the following definition.

\begin{definition}\label{lemma:f_n} We say a sequence $\{f_n\}$, of functions defined on $I$,  \textbf{ a sequence of  scaling functions for the singular Bj\"{o}rling data $\{\lambda,\alpha,\beta\}$ } if for each $n$, there are  points $t_{n_k}\in I$, $k= 0,1\ldots a_n$,  and  $a_n\geq n$ such that,
\begin{enumerate}
\item On the set $\{t_{n_k}: k=0,1,..,a_n\}$, $\lambda$ is 1-1.
 \item $\forall k= 0, 1,.. a_n,\;\; f_n(t_{n_k})=0$ and $ f_n^\prime(t_{n_k})\neq 0$.
 \item $g_n(\lambda(t))= f_n(t)(\alpha^\prime(t)-i\beta(t))$ has analytic extension on a neighborhood $\Omega$, of the trace of $\lambda$.
\item $Re \int_{\gamma_i}g_n =0$ for each closed curve $\gamma_i$ in the domain $\Omega$ as in (3).
\item $\|g_n\|_\Omega\to zero$.
\end{enumerate}
\end{definition}

For arbitrary data $\{\lambda,\alpha,\beta\}$, it is not direct to say whether such function exists or not, but for the particular cases, we can say.

\begin{example}
Let the   singular Bj\"{o}rling data be as in the example \ref{shrinkingbaseexample} with $\lambda(t)= (t,0)$ on $[0,1]$.  Then the sequence of functions as defined in the equation \ref{eqn:functionf_n} on $I$ with  $t_{n_k}=\frac{1}{k};\;\;k=  1,.., n$ turns out to be the sequence of scaling functions.
\end{example}

\begin{example}\label{f_nexamplet}
Let $\lambda(t)= e^{it},\;0\leq t\leq 2\pi$ and  $\alpha$, $\beta$ as in the example \ref{exampls:cone-likeonunitcircle}.  We take $f_n(t)= \frac{1}{2^{n+2}.n}\cos nt$.
Let $\Omega=  Ann(0, \frac{1}{2}, 2):=\{\frac{1}{2}< |z|<2\}.$

We see, for all $n$,  $f_n$ is zero at each $t_{n_k}=(2k+1)\frac{\pi}{2n};\; k=0,\ldots, 2n-1$. Moreover $f_n^\prime(t_{n_k})\neq 0$. Here $a_n= 2n-1>n$.  We have
\begin{align*}
    g_n(e^{it})&= \frac{1}{2^{n+2}n}\cos nt(\alpha^\prime(t)-i\beta(t))\\
    &= \frac{1}{2^{n+2}n} e^{-it}\left(\cos t, \sin t, -1\right)\cos(nt)\\
    &=\frac{1}{2^{n+2}n} e^{-it}\left(\frac{e^{it}+e^{-it}}{2}, \frac{e^{it}-e^{-it}}{2i}, -1\right)\left(\frac{e^{int}+e^{-int}}{2}\right).
\end{align*}
Each $g_n(e^{it})$ has analytic extension $g(z)$ in $z= e^{it}$. Moreover $Re \int_{\gamma_i}g_n(z)dz=0$ for each closed curve $\gamma_i$ in $\Omega$.

We denote, $g_n= (g_{n1}, g_{n2}, g_{n3})$,  for all $z\in \overline{\Omega}$, we see,
for each $i= 1, 2 ,3$,  $|g_{ni}(z)|<\frac{1}{n}$, therefore $\|g_n(z)\|<\frac{1}{n}$. It gives $\|g_n\|_\Omega=\sup_{z\in\overline{\Omega}}\|g_{n}(z)\|<\frac{1}{n}\to 0$
\end{example}

\subsection{For general singular Bj\"{o}rling problem}
Let $\lambda$ be a curve whose trace is compact and $\alpha$ and $\beta$ are the singular Bj\"{o}rling data as in the definition \ref{defn:singularBjorlingdata}, that have cone-like singularity on the trace of $\lambda$, so data $\{\lambda, \alpha, \beta\}$ satisfies condition of the Proposition \ref{Prop:Cone-likeLambda}.

Since we have for all $t\in I$  $\alpha_3^\prime\lambda_1^\prime+\beta_3\lambda_2^\prime=0$, $(\alpha_3^\prime(t),\beta_3(t))\neq (0,0)$, $(\lambda_1^\prime, \lambda_2^\prime)\neq (0,0)$, therefore we have for all $t$,
\begin{equation}\label{eqnmain2}
\forall t\in I,\;\;
-\beta_3(t)\lambda_1^\prime(t)+\alpha_3^\prime(t)\lambda_2^\prime(t)\neq 0.
\end{equation}

Suppose there is a sequence of the scaling functions for the $\{\lambda,\alpha,\beta\}$, as in the definition \ref{lemma:f_n}.  We  define
\[\alpha_n^\prime(t)= \alpha^\prime(t)+f_n(t)\beta(t),\;\;
\beta_n(t)= \beta(t)-f_n(t)\alpha^\prime(t)\]

For any $t$, both $\alpha_n^\prime$ or $\beta_n$ are not zero.  Moreover,
\[ \psi_n(\lambda(t))= \alpha_n^\prime(t)-i\beta_n(t)
=\alpha^\prime(t)-i\beta(t)+f_n(t)(\beta(t))+i\alpha^\prime(t))\]
Since for the Bj\"{o}rling data, $\{\lambda, \alpha, \beta\}$, we have $f_n$ and $g$ as in the definition \ref{lemma:f_n}. We get
\[\psi_n(\lambda(t))=\phi(\lambda(t))-ig_n(\lambda(t)).\]

These $\psi_n$ have analytic extension in  variable $z=\lambda(t)$. For each $n$ the triplet $\{\lambda, \alpha_n, \beta_n\}$ satisfy the condition to be a singular Bj\"{o}rling data as in the definition \ref{defn:singularBjorlingdata}.

Let  $t_{n_k}$ be as  in the definition \ref{lemma:f_n} and $\alpha_n^\prime= (\alpha_{n1}^\prime, \alpha_{n2}^\prime, \alpha_{n3}^\prime)$ and $\beta_n= (\beta_{n1}, \beta_{n2}, \beta_{n3})$.  At each $t_{n_k}\in I$, we have
\begin{align*}
     & \alpha_{n3}^\prime(t_{n_k})\lambda_1^\prime(t_{n_k})+\beta_{n3}(t_{n_k})\lambda_2^\prime(t_{n_k})\\
     &=\alpha_3^\prime(t_{n_k})\lambda_1^\prime(t_{n_k})+\beta_3(t_{n_k})\lambda_2^\prime(t_{n_k})+f_n(t_{n_k})(\alpha_3^\prime(t_{n_k})\lambda_1^\prime(t_{n_k})-\beta_3(t_{n_k})\lambda_2^\prime(t_{n_k}))
\end{align*}
Since for all $t\in I$  $\alpha_3^\prime\lambda_1^\prime+\beta_3\lambda_2^\prime=0$ and $f(t_{n_k})=0$ we have for all $t_{n_k}$,
\begin{equation}\label{lasteqn1}
    \alpha_{n3}^\prime(t_{n_k})\lambda_1^\prime(t_{n_k})+\beta_{n3}(t_{n_k})\lambda_2^\prime(t_{n_k})=0
\end{equation}

For each $t$, $\lambda_1^{\prime\prime}(t)\alpha_{3}^\prime(t)+\lambda_1^\prime(t)\alpha_{3}^{\prime\prime}+\lambda_2^\prime\beta_{3}^\prime(t)+\lambda_2^{\prime\prime}(t)\beta_{3}(t)=0$ and for each $t_{n_k}$, $f(t_{n_k})=0,$ we get
\[
\lambda_1^{\prime\prime}\alpha_{n3}^\prime+\lambda_1^\prime\alpha_{n3}^{\prime\prime}+\lambda_2^\prime\beta_{n3}^\prime+\lambda_2^{\prime\prime}\beta_{n3}(t_{n_k})= f_n^\prime(t_{n_k})\left(\beta_3(t_{n_k})\lambda_1^\prime(t_{n_k})-\alpha_3^\prime(t_{n_k})\lambda_2^\prime(t_{n_k})\right)\]

As $f^\prime(t_{n_k})\neq 0$ and using equation \ref{eqnmain2}, we have
\begin{equation}\label{eqnmain3}
  \forall t_{n_k},\;\; \lambda_1^{\prime\prime}\alpha_{n3}^\prime+\lambda_1^\prime\alpha_{n3}^{\prime\prime}+\lambda_2^\prime\beta_{n3}^\prime+\lambda_2^{\prime\prime}\beta_{n3}\neq 0
\end{equation}
 
 We have $\alpha_n^\prime(t_{n_k})=\alpha(t_{n_k})$ and $\beta_n(t_{n_k})= \beta(t_{n_k})$ therefore
\begin{equation}\label{maineqn4}
\text{ when } \alpha^\prime_n(t)\neq 0,\;\; D(\alpha_{n12}^\prime,\alpha_{n12}^{\prime\prime})\neq 0, \text{ when } \beta_n^\prime(t)\neq 0,\; D(\beta_{n12},\beta_{n12}^{\prime})\neq 0.
\end{equation}

Therefore at each $t_{n_k},\; k=0,1,.. ,a_n$,  (1), (2) and (3) conditions (equations \ref{lasteqn1}, \ref{eqnmain3}, \ref{maineqn4}) of proposition \ref{prop:Swallotail} satisfies.

As $\lambda$ is $1-1$ at these points, for each $n$  we get  $a_n+1$ swallowtails on each $X_n$.  Since $a_n>n$, we have increasing number of points  on  the trace of $\lambda$ such the maxface $X_n$ for the  data $\{\lambda, \alpha_n, \beta_n\}$  has increasing numbers of swallowtails.

Let fix some point $t_0\in I$ and we denote $X= (X^1, X^2, X^3)$, for $z\in \overline{\Omega}$, let $|\int_{ \lambda(t_0)}^zdw|<L$. Then $\forall z\in \overline{\Omega},$ we for each $j= 1,2,3$, we get
$$|X_n^j(z)-X^j(z)|= |Re\int_{\lambda(t_0)}^z \psi_n^j(w)-\phi^j(w) dw|$$
$$=\left|Re\int_{\lambda(t_0)}^z -ig_n(w)\right|$$
$$\leq \|g_n\|_\Omega.\left|\int_{\lambda(t_0)}^zdw\right|\leq \|g_n\|_\Omega L$$

Therefore we have
$$\|X_n-X\|_\Omega \leq \|g_n\|_\Omega L$$ and therefore $X_n\to X$ on $\overline{\Omega}$

Summarizing all above, we proved the following:

\begin{theorem}\label{thm:main}
Let $X$ be a maxface having generalized cone-like singularity on $trace$ of $\lambda$, with singular Bj\"{o}rling data $\{\lambda,\alpha, \beta\}$. Moreover the data  $\{\lambda,\alpha, \beta\}$ has a sequence of the scaling functions as in the definition \ref{lemma:f_n}. Then there is a sequence of maxfaces $X_n$ defined on a neighborhood $\Omega$ of trace of $\lambda$, having an increasing number of swallowtails, and sequence of maxfaces converges (in the norm $\|\;.\;\|_\Omega$) to $X$, having cone-like.
\end{theorem}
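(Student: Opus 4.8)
The plan is to realize each $X_n$ as the solution of a perturbed singular Bj\"orling problem, where the perturbation is exactly the scaling sequence $\{f_n\}$ supplied by the hypothesis: the zeros of $f_n$ will be forced to be swallowtail points by Proposition \ref{prop:Swallotail}, while the uniform smallness of $f_n$ (equivalently of $g_n$) will drive $X_n\to X$ in $\|\;.\;\|_\Omega$.

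First I would extract from the cone-like hypothesis the one nonvanishing that makes everything work. Since $\{\lambda,\alpha,\beta\}$ satisfies Proposition \ref{Prop:Cone-likeLambda}, we have $\alpha_3'\lambda_1'+\beta_3\lambda_2'\equiv 0$ together with $(\alpha_3',\beta_3)\neq(0,0)$ and $\lambda'\neq 0$, so the orthogonal combination $-\beta_3\lambda_1'+\alpha_3'\lambda_2'$ never vanishes (equation \ref{eqnmain2}). I would then define the perturbed data by $\alpha_n'=\alpha'+f_n\beta$ and $\beta_n=\beta-f_n\alpha'$, and observe that the associated data satisfies $\psi_n(\lambda(t))=\phi(\lambda(t))-ig_n(\lambda(t))$ with $g_n=f_n(\alpha'-i\beta)$. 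Properties (3) and (4) of Definition \ref{lemma:f_n} guarantee that $\psi_n$ extends analytically near the trace of $\lambda$ and has vanishing real periods, so each $\{\lambda,\alpha_n,\beta_n\}$ is admissible Bj\"orling data and produces a maxface $X_n$ via equation \ref{eqn:NEWsolSingularBjorline}.

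Next I would verify, point by point at each zero $t_{n_k}$ of $f_n$, the three swallowtail criteria of Proposition \ref{prop:Swallotail}. Because $f_n(t_{n_k})=0$, the perturbation contributes nothing to the first-order relation, so $\alpha_{n3}'\lambda_1'+\beta_{n3}\lambda_2'=0$ (condition (1)); and because $\alpha_n'(t_{n_k})=\alpha'(t_{n_k})$, $\beta_n(t_{n_k})=\beta(t_{n_k})$, the determinant nondegeneracy $D$ is inherited directly from the cone-like data (condition (3)). The one genuinely new computation is the $H'$ condition: differentiating and again using $f_n(t_{n_k})=0$ collapses that expression to $f_n'(t_{n_k})\,(\beta_3\lambda_1'-\alpha_3'\lambda_2')$, which is nonzero precisely because $f_n'(t_{n_k})\neq 0$ (property (2)) and because of the nonvanishing recorded in equation \ref{eqnmain2} (condition (2)). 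Since $\lambda$ is injective on $\{t_{n_k}\}$ and $a_n\geq n$, this yields $a_n+1$ distinct swallowtails on $X_n$, an increasing number.

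Convergence is then the easy part: writing $X_n^j(z)-X^j(z)=Re\int_{\lambda(t_0)}^{z}(-ig_n)$ and bounding the integral by the length constant $L$ of $\overline{\Omega}$ gives $\|X_n-X\|_\Omega\leq L\,\|g_n\|_\Omega$, which tends to zero by property (5). The step I expect to be the real obstacle is condition (2): one must see that the $H'$ expression genuinely reduces to $f_n'(t_{n_k})$ times the orthogonal combination, and that this combination is nonzero by \ref{eqnmain2}. This is exactly where the cone-like geometry and the defining properties of the scaling functions must cooperate, and it is what guarantees the swallowtails actually appear rather than collapsing back into shrinking singularities.
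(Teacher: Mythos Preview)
Your proposal is correct and follows essentially the same approach as the paper: you define the perturbed data $\alpha_n'=\alpha'+f_n\beta$, $\beta_n=\beta-f_n\alpha'$, verify the three swallowtail criteria of Proposition~\ref{prop:Swallotail} at each $t_{n_k}$ (using $f_n(t_{n_k})=0$, $f_n'(t_{n_k})\neq 0$, and the nonvanishing of $-\beta_3\lambda_1'+\alpha_3'\lambda_2'$ from equation~\ref{eqnmain2}), and obtain convergence from $\|X_n-X\|_\Omega\leq L\|g_n\|_\Omega\to 0$. The only point worth making explicit is that the $H'$ computation also uses the identity $\lambda_1''\alpha_3'+\lambda_1'\alpha_3''+\lambda_2'\beta_3'+\lambda_2''\beta_3\equiv 0$, which you get by differentiating the cone-like relation $\alpha_3'\lambda_1'+\beta_3\lambda_2'\equiv 0$; this is what kills the unperturbed part and leaves only the $f_n'$ term.
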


Many examples can be constructed using the theorem \ref{thm:main}. In particular, we have the following example similar to the one we have already seen \ref{examp:Seqnswallotail1}.

\begin{example}
Let $\lambda(t)= e^{it};\;\;0\leq t\leq 2\pi$, we start with the data $\alpha^\prime(t)$ and $\beta$ as in the example \ref{exampls:cone-likeonunitcircle}, so that every point on the unit circle is a cone-like singularity.  For this Bj\"{o}rling data, we have already seen there exists a sequence of scaling functions as in the example \ref{f_nexamplet}  To get a sequence of maxfaces having an increasing number of swallowtails; we start with the singular Bj\"{o}rling data  on $\lambda$ as

\begin{eqnarray*}
&\alpha_n^\prime(t)= \alpha^\prime(t)+f_n(t)\beta(t)=\left(\cos t+\frac{\cos nt \sin t}{n 2^{n+2}}\right)(\cos t, \sin t, -1)\\
&\beta_n(t)= \beta(t)-f_n(t)\alpha^\prime(t)=\left(\sin t-\frac{\cos nt \cos t}{n 2^{n+2}}\right)(\cos t, \sin t, -1)
\end{eqnarray*}

 We have $t_{n_k}= (2k+1)\frac{\pi}{2n},\;\; k=0,1,....,2n-1$, and $a_n= 2n-1$, so by the theorem \ref{thm:main}, every $X_n$ has at least $2n$ swallowtails on the unit circle and that converges to the cone-like.
\end{example}

\medskip
\bibliography{Refs}

\end{document}